\newtheorem{theorem}{Theorem}
\newtheorem{corollary}{Corollary}
\newtheorem{lemma}{Lemma}
\newtheorem{remark}{Remark}
\newenvironment{proof}[1][Proof]{\noindent\textbf{#1.} }{\ \rule{0.5em}{0.5em}}
\begin{document}

\title{\textbf{Approximation of functions from }$L^{p}\left( \omega \right)
_{\beta }$\textbf{\ by matrix means of their Fourier series}}
\author{\textbf{Rados\l awa Kranz,} \textbf{W\l odzimierz \L enski \ and
Bogdan Szal} \\
University of Zielona G\'{o}ra\\
Faculty of Mathematics, Computer Science and Econometrics\\
65-516 Zielona G\'{o}ra, ul. Szafrana 4a, Poland\\
R.Kranz@wmie.uz.zgora.pl ,\\
W.Lenski@wmie.uz.zgora.pl ,\\
B.Szal @wmie.uz.zgora.pl}
\date{}
\maketitle

\begin{abstract}
We formulate some special conditions for the integrable functions and moduli
of continuity. We give the results on rate of approximation of such
functions by matrix means of their Fourier series, where the entries of the
rows of the matrix generate the sequences belonging to the classes $MRBVS$
and $MHBVS$. We also present some results on norm approximation for
functions from the generalized integral Lipschitz classes.

\ \ \ \ \ \ \ \ \ \ \ \ \ \ \ \ \ \ \ \ 

\textbf{Key words: }Rate of approximation, summability of Fourier series,
Lipschitz classes

\ \ \ \ \ \ \ \ \ \ \ \ \ \ \ \ \ \ \ 

\textbf{2000 Mathematics Subject Classification: }42A24
\end{abstract}

\section{Introduction}

Let $L^{p}\ (1\leq p<\infty )\;$ be the class of all $2\pi $--periodic
real--valued functions integrable in the Lebesgue sense with $p$--th power
over $Q=$ $[-\pi ,\pi ]$ with the seminorm 
\begin{equation}
\Vert f\Vert :=\Vert f(\cdot )\Vert _{_{L^{p}}}=\left( \int_{_{_{Q}}}\mid
f(t)\mid ^{p}dt\right) ^{1/p}  \label{1}
\end{equation}%
and consider the trigonometric Fourier series 
\begin{equation*}
Sf(x):=\frac{a_{0}(f)}{2}+\sum_{\nu =1}^{\infty }(a_{\nu }(f)\cos \nu
x+b_{\nu }(f)\sin \nu x)
\end{equation*}%
with the partial sums \ $S_{k}f$.

Let $A:=\left( a_{n,k}\right) $ be an infinite lower triangular matrix of
real numbers such that 
\begin{equation*}
a_{n,k}\geq 0\text{ when \ }k=0,1,2,...n\text{, \ }a_{n,k}=0\text{\ when }k>n%
\text{ ,}
\end{equation*}%
\begin{equation*}
\sum_{k=0}^{n}a_{n,k}=1\text{, \ where }n=0,1,2,...\text{\ ,}
\end{equation*}%
and let the \ $A-$transformation$\ $of \ $\left( S_{k}f\right) $ be given by 
\begin{equation*}
T_{n,A}^{\text{ }}f\left( x\right) :=\sum_{k=0}^{n}a_{n,k}S_{k}f\left(
x\right) \text{ \ \ \ }\left( n=0,1,2,...\right) .
\end{equation*}%
Denote, for $m=0,1,2,..,n,$%
\begin{equation*}
A_{n,m}=\sum_{k=0}^{m}a_{n,k}\text{ \ \ and \ }\overline{A}%
_{n,m}=\sum\limits_{k=m}^{n}a_{n,k}.
\end{equation*}%
We define two classes of sequences (see \cite{L}).

A sequence $c:=\left( c_{k}\right) $ of nonnegative numbers tending to zero
is called the Rest Bounded Variation Sequence, or briefly $c\in RBVS$, if it
has the property 
\begin{equation*}
\sum\limits_{k=m}^{\infty }\left\vert c_{k}-c_{k+1}\right\vert \leq K\left(
c\right) c_{m}
\end{equation*}
for all positive integer $m$, where $K\left( c\right) $ is a constant
depending only on $c$.

A sequence $c:=\left( c_{k}\right) $ of nonnegative numbers will be called
the Head Bounded Variation Sequence, or briefly $c\in HBVS$, if it has the
property 
\begin{equation*}
\sum\limits_{k=0}^{m-1}\left\vert c_{k}-c_{k+1}\right\vert \leq K\left(
c\right) c_{m}
\end{equation*}
for all positive integer $m$, or only for all $m\leq n$ if the sequence $c$
has only finite nonzero terms and the last nonzero term is $c_{n}$.

Now, we define two another classes of sequences.

Follows by L. Leindler (see \cite{L2}) a sequence $c:=\left( c_{k}\right) $
of nonnegative numbers tending to zero is called the Mean Rest Bounded
Variation Sequence, or briefly $c\in MRBVS$, if it has the property 
\begin{equation}
\sum\limits_{k=m}^{\infty }\left\vert c_{k}-c_{k+1}\right\vert \leq K\left(
c\right) \frac{1}{m+1}\sum\limits_{k\geq m/2}^{m}c_{k}  \label{2}
\end{equation}%
for all positive integer $m$.

Similar classes of sequences were considered in the papers \cite{DT}, \cite%
{LT} and \cite{T}.

Analogously, a sequence $c:=\left( c_{k}\right) $ of nonnegative numbers
will be called the Mean Head Bounded Variation Sequence, or briefly $c\in
MHBVS$, if it has the property 
\begin{equation}
\sum\limits_{k=0}^{n-m-1}\left| c_{k}-c_{k+1}\right| \leq K\left( c\right) 
\frac{1}{m+1}\sum\limits_{k=n-m}^{n}c_{k},  \label{3}
\end{equation}
for all positive integer $m<n$,\ where the sequence $c$ has only finite
nonzero terms and the last nonzero term is $c_{n}$.

It is clear that (see \cite{S}) 
\begin{equation*}
RBVS\varsubsetneq MRBVS\text{ \ \ and \ \ }HBVS\varsubsetneq MHBVS.
\end{equation*}

Consequently, we assume that the sequence $\left( K\left( \alpha _{n}\right)
\right) _{n=0}^{\infty }$ is bounded, that is, that there exists a constant $%
K$ such that 
\begin{equation*}
0\leq K\left( \alpha _{n}\right) \leq K
\end{equation*}
holds for all $n$, where $K\left( \alpha _{n}\right) $ denote the sequence
of constants appearing in the inequalities (\ref{2}) or (\ref{3}) for the
sequence $\alpha _{n}=\left( a_{n,k}\right) _{k=0}^{n}$, $n=0,1,2...$.

Now we can give the conditions to be used later on. We assume that for all $%
n $ and $0\leq m<n$%
\begin{equation}
\sum\limits_{k=m}^{n-1}\left\vert a_{n,k}-a_{n,k+1}\right\vert \leq K\frac{1%
}{m+1}\sum\limits_{k\geq m/2}^{m}a_{n,k}  \label{4}
\end{equation}
and 
\begin{equation}
\sum\limits_{k=0}^{n-m-1}\left\vert a_{n,k}-a_{n,k+1}\right\vert \leq K\frac{%
1}{m+1}\sum\limits_{k=n-m}^{n}a_{n,k}  \label{5}
\end{equation}
hold if $\left( a_{n,k}\right) _{k=0}^{n}$ belongs to $MRBVS$ or $MHBVS$,
for $n=1,2,...$, respectively.

As a measure of approximation of $f$ by $T_{n,A}^{\text{ }}f$ we use the
generalized modulus of continuity of $f$ in the space $L^{p}$ defined for $%
\beta \geq 0$ by the formula 
\begin{equation*}
\omega _{\beta }f\left( \delta \right) _{L^{p}}:=\sup_{0\leq \left\vert
t\right\vert \leq \delta }\left\{ \left\vert \sin \frac{t}{2}\right\vert
^{\beta p}\int\limits_{0}^{\pi }\left\vert \varphi _{x}\left( t\right)
\right\vert ^{p}dx\right\} ^{\frac{1}{p}},
\end{equation*}%
where 
\begin{equation*}
\varphi _{x}\left( t\right) :=f\left( x+t\right) +f\left( x-t\right)
-2f\left( x\right) .
\end{equation*}%
It is clear that for $\beta \geq \alpha \geq 0$%
\begin{equation*}
\text{ }\omega _{\beta }f\left( \delta \right) _{L^{p}}\leq \omega _{\alpha
}f\left( \delta \right) _{L^{p}}
\end{equation*}%
and it is easily seen that $\omega _{0}f\left( \cdot \right) _{L^{p}}=\omega
f\left( \cdot \right) _{L^{p}}$ is the classical modulus of smoothness.

In our theorems we shall consider the pointwise deviation $T_{n,A}^{\text{ }%
}f\left( x\right) -f\left( x\right) $. We shall formulate the special
conditions for the functions and moduli of continuity. Consequently, we also
give some results on norm approximation. We shall use the matrix which the
entries of the rows generate the sequences belonging to the classes of
sequences $MRBVS$ and $MHBVS.$ Thus we essentially extend and improve the
earlier results \cite[Theorem 2, p. 347]{SL} and \cite{LS}. Moreover, we\
prove that from our results the pointwise version of the correct form of the
Lal theorem follows.

We shall write $I_{1}\ll I_{2}$ if there exists a positive constant $K$,
sometimes depending on some parameters, such that $I_{1}\leq KI_{2}$.

\section{Statement of the results}

Let us consider a function $\omega $ of modulus of continuity type on the
interval $[0,2\pi ],$ i.e. a nondecreasing continuous function having the
following properties:\ $\omega \left( 0\right) =0,$\ $\omega \left( \delta
_{1}+\delta _{2}\right) \leq \omega \left( \delta _{1}\right) +\omega \left(
\delta _{2}\right) $\ for any\ $0\leq \delta _{1}\leq \delta _{2}\leq \delta
_{1}+\delta _{2}\leq 2\pi $. It is easy to conclude that the function \ $%
\delta ^{-1}\omega \left( \delta \right) $\ is quasi nonincreasing function
of \ $\delta .$\ Let 
\begin{equation*}
L^{p}\left( \omega \right) _{\beta }=\left\{ f\in L^{p}:\omega _{\beta
}f\left( \delta \right) _{L^{p}}\ll \omega \left( \delta \right) \right\} .
\end{equation*}
It is clear that, for $\beta >\alpha \geq 0,$%
\begin{equation*}
\text{ }L^{p}\left( \omega \right) _{\alpha }\subset L^{p}\left( \omega
\right) _{\beta }\text{ .}
\end{equation*}
Now, we can formulate our main results on the degrees of pointwise
summability. The pointwise nature of our results ensure the assumptions of
presented below theorems with $f\left( x\right) $ (cf. \cite{BL}).

\begin{theorem}
Let \ $f\in L^{p}$ $\left( 1<p<\infty \right) $ and let $\omega $ satisfy 
\begin{equation}
\sum\limits_{m=1}^{n}(m+1)^{\beta +1-\frac{2}{q}}\left\{ \int_{\frac{\pi }{%
m+1}}^{\frac{\pi }{m}}\left( \frac{\left\vert \varphi _{x}\left( t\right)
\right\vert }{\omega \left( t\right) }\right) ^{p}\sin ^{\beta p}\frac{t}{2}%
dt\right\} ^{1/p}=O_{x}\left( \left( n+1\right) ^{\beta +1/p}\right)
\label{2.6}
\end{equation}%
and 
\begin{equation}
\left\{ \int_{0}^{\frac{2\pi }{n}}\left( \frac{\left\vert \varphi _{x}\left(
t\right) \right\vert }{\omega \left( t\right) }\right) ^{p}\sin ^{\beta p}%
\frac{t}{2}dt\right\} ^{1/p}=O_{x}\left( \left( n+1\right) ^{-1/p}\right)
\label{2.7}
\end{equation}%
with $0\leq \beta <1-\frac{1}{p}$ and $q=\frac{p}{p-1}.$ If $\left(
a_{n,k}\right) _{k=0}^{n}\in MRBVS,$ then 
\begin{equation*}
\left\vert T_{n,A}^{\text{ }}f\left( x\right) -f\left( x\right) \right\vert
=O_{x}\left( \left( n+1\right) ^{\beta +\frac{1}{p}}\sum_{k=0}^{n}a_{n,k}%
\omega \left( \frac{\pi }{k+1}\right) \right)
\end{equation*}%
and if $\left( a_{n,k}\right) _{k=0}^{n}\in MHBVS,$ then 
\begin{equation*}
\left\vert T_{n,A}^{\text{ }}f\left( x\right) -f\left( x\right) \right\vert
=O_{x}\left( \left( n+1\right) ^{\beta +\frac{1}{p}}\sum_{k=0}^{n}a_{n,n-k}%
\omega \left( \frac{\pi }{k+1}\right) \right) ,
\end{equation*}%
for considered\ $x.$
\end{theorem}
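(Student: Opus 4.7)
The starting point is the standard integral representation
\[
T_{n,A}f(x)-f(x)=\frac{1}{\pi}\int_{0}^{\pi}\varphi_{x}(t)K_{n}(t)\,dt,\qquad K_{n}(t):=\sum_{k=0}^{n}a_{n,k}\frac{\sin((k+\tfrac{1}{2})t)}{2\sin(t/2)}.
\]
I would split $[0,\pi]$ at $t=\pi/(n+1)$ and partition the remainder dyadically into $I_{m}:=[\pi/(m+1),\pi/m]$ for $m=1,\dots,n$. For the small-$t$ piece the crude bound $|K_{n}(t)|\ll n+1$, combined with H\"older's inequality after the splitting $|\varphi_{x}|=(|\varphi_{x}|/\omega)\sin^{\beta}(t/2)\cdot\omega(t)\sin^{-\beta}(t/2)$ (which requires $\beta<1-1/p$ so that $\sin^{-\beta q}(t/2)$ is integrable near $0$), together with assumption (\ref{2.7}) and the monotonicity of $\omega$, gives a contribution of order $\omega(\pi/(n+1))(n+1)^{\beta+1/p}$, already within the claimed bound.

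On each $I_{m}$ the kernel needs a finer estimate. Applying Abel summation to $\sum_{k=0}^{n}a_{n,k}\sin((k+\tfrac{1}{2})t)$, split at $k=m$, and using $\bigl|\sum_{j=0}^{k}\sin((j+\tfrac{1}{2})t)\bigr|\ll 1/t$, the $MRBVS$ property (\ref{4}) produces
\[
|K_{n}(t)|\ll \frac{1}{\sin(t/2)}\sum_{k=0}^{m-1}a_{n,k}|\sin((k+\tfrac{1}{2})t)|+\frac{1}{t\sin(t/2)}\Bigl(a_{n,n}+\frac{1}{m+1}\sum_{k\geq m/2}^{m}a_{n,k}\Bigr).
\]
The $MHBVS$ case is symmetric: (\ref{5}) replaces $\sum_{k\geq m/2}^{m}a_{n,k}$ by $\sum_{k=n-m}^{n}a_{n,k}$, which is precisely what produces $a_{n,n-k}$ in the second half of the conclusion.

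The remainder is careful bookkeeping. On $I_{m}$ one has $\sin(t/2)\asymp 1/(m+1)$ and $\omega(t)\leq\omega(\pi/m)$, so H\"older's inequality applied to the dominant second term in $|K_n(t)|$ gives
\[
\int_{I_m}|\varphi_{x}(t)||K_{n}(t)|\,dt\ll \omega(\pi/m)(m+1)^{\beta+1-2/q}\Bigl(\frac{1}{m+1}\sum_{k\geq m/2}^{m}a_{n,k}+a_{n,n}\Bigr)\Bigl\{\int_{I_{m}}(|\varphi_{x}|/\omega)^{p}\sin^{\beta p}(t/2)\,dt\Bigr\}^{1/p}.
\]
Summing over $m$ and invoking (\ref{2.6}) controls the factor $(m+1)^{\beta+1-2/q}$ times the $L^{p}$-integral, yielding an overall $(n+1)^{\beta+1/p}$. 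Finally, rearranging $\sum_m(m+1)^{-1}\omega(\pi/m)\sum_{k\geq m/2}^{m}a_{n,k}$ by interchanging summations (using that $\omega(\pi/m)\asymp\omega(\pi/(k+1))$ in the relevant ranges) produces $\sum_{k=0}^{n}a_{n,k}\omega(\pi/(k+1))$, and similarly with $a_{n,n-k}$ in the $MHBVS$ case. The first, subdominant term in $|K_n(t)|$ is handled by the same scheme and gives a lower-order contribution.

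\textbf{The hard part} will be producing the kernel estimate on $I_m$ above: the $MRBVS/MHBVS$ constant must appear only through the \emph{mean} partial sum $\sum_{k\geq m/2}^{m}a_{n,k}$ (resp.\ $\sum_{k=n-m}^{n}a_{n,k}$) rather than through individual $a_{n,k}$, and the power of $t$ extracted from Abel summation must be precisely right so that the $\sin^{-\beta}(t/2)$ coming from H\"older combines with the $\sin^{\beta p}(t/2)$ weight in (\ref{2.6}) to produce exactly the exponent $\beta+1-\tfrac{2}{q}$ on $(m+1)$; any looser pairing loses the sharpness of the final bound.
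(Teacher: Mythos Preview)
Your plan coincides with the paper's: the same split near $t\asymp 1/n$, the same treatment of the small-$t$ piece via $|D_k|\le k+1$, H\"older and (\ref{2.7}), and the same Abel-summation kernel estimate under the $MRBVS$/$MHBVS$ hypothesis on each $I_m$.

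The one genuine gap is in your final bookkeeping. As written, you first invoke (\ref{2.6}) to control $\sum_m (m+1)^{\beta+1-2/q}\bigl\{\int_{I_m}\cdots\bigr\}^{1/p}$ and \emph{then separately} rearrange $\sum_m(m+1)^{-1}\omega(\pi/m)\sum_{k\ge m/2}^{m}a_{n,k}$. But the quantity you are estimating is a single sum over $m$ of the \emph{product} of these two $m$-dependent factors; neither can be pulled outside the other, so the two steps as stated do not combine. The repair is to interchange the $k$- and $m$-summations \emph{first}: for fixed $k$ the inner sum runs over $m\in[k,2k]$, on which $\omega(\pi/m)\asymp\omega(\pi/(k+1))$, and only then does (\ref{2.6}) legitimately bound the remaining inner $m$-sum by $(n+1)^{\beta+1/p}$. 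The paper carries this out more transparently by first collapsing your two-term kernel bound into the single clean estimate $\bigl|\sum_k a_{n,k}D_k(t)\bigr|\ll t^{-1}A_{n,\tau}$ (its Lemma~2, whose proof also disposes of your loose $a_{n,n}$ term via the $MRBVS$ consequence $a_{n,n}\ll(\tau+1)^{-1}A_{n,\tau}$), then writing $\int|\varphi_x|\,t^{-1}A_{n,\tau}\,dt=\sum_{k} a_{n,k}\sum_{m\ge k}\int_{I_m}|\varphi_x|\,t^{-1}\,dt$ and applying H\"older and (\ref{2.6}) last. With that reordering your argument becomes exactly the paper's.
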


Supposing a stronger condition than (\ref{2.6}) we can obtain a better order
of approximation.

\begin{theorem}
Let \ $f\in L^{p}$ $\left( 1<p<\infty \right) $ and let $\omega $ satisfy 
\begin{equation}
\sum\limits_{m=1}^{n}(m+1)^{\beta +1-\frac{2}{q}}\left\{ \int_{\frac{\pi }{%
m+1}}^{\frac{\pi }{m}}\left( \frac{\left\vert \varphi _{x}\left( t\right)
\right\vert }{\omega \left( t\right) }\right) ^{p}\sin ^{\beta p}\frac{t}{2}%
dt\right\} ^{1/p}=O_{x}\left( \left( n+1\right) ^{\beta }\right)  \label{2.8}
\end{equation}%
and$\ $ (\ref{2.7}) with $0\leq \beta <1-\frac{1}{p}$ and $q=\frac{p}{p-1}.$
If $\left( a_{n,k}\right) _{k=0}^{n}\in MRBVS,$ then 
\begin{equation*}
\left\vert T_{n,A}^{\text{ }}f\left( x\right) -f\left( x\right) \right\vert
=O_{x}\left( \left( n+1\right) ^{\beta }\sum_{k=0}^{n}a_{n,k}\omega \left( 
\frac{\pi }{k+1}\right) \right)
\end{equation*}%
and if $\left( a_{n,k}\right) _{k=0}^{n}\in MHBVS,$ then 
\begin{equation*}
\left\vert T_{n,A}^{\text{ }}f\left( x\right) -f\left( x\right) \right\vert
=O_{x}\left( \left( n+1\right) ^{\beta }\sum_{k=0}^{n}a_{n,n-k}\omega \left( 
\frac{\pi }{k+1}\right) \right) ,
\end{equation*}%
for considered\ $x.$
\end{theorem}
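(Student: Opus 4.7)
The plan is to follow the same scheme as the proof of Theorem~1, with the stronger hypothesis (\ref{2.8}) entering at the decisive H\"older step in place of (\ref{2.6}); this single change yields the conclusion with $(n+1)^{\beta}$ instead of $(n+1)^{\beta+1/p}$. I would first rewrite the deviation in kernel form
$$T_{n,A}f(x)-f(x)=\frac{1}{\pi}\int_0^{\pi}\varphi_x(t)\,K_n(t)\,dt,\qquad K_n(t):=\sum_{k=0}^{n}a_{n,k}D_k(t),$$
and split the integration at $t=\pi/(n+1)$ into a near-origin piece $J_1$ and a tail piece $J_2$.

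For $J_1$, I would apply H\"older's inequality with the weights $\sin^{\beta p}(t/2)$ and $\sin^{-\beta q}(t/2)$ distributed between the two factors. Hypothesis (\ref{2.7}) bounds the $\varphi_x/\omega$ factor by $O_x((n+1)^{-1/p})$; the trivial estimate $|K_n(t)|\le n+1$ on $[0,\pi/(n+1)]$, together with the monotonicity of $\omega$ and the integral $\int_0^{\pi/(n+1)}t^{-\beta q}dt\ll (n+1)^{\beta q-1}$ (finite since $\beta<1-1/p$), handles the kernel side. Collecting powers gives $|J_1|\ll (n+1)^{\beta}\omega(\pi/(n+1))$, which is absorbed into the right-hand side of the theorem because $\omega(\pi/(n+1))\le\sum_{k=0}^{n}a_{n,k}\omega(\pi/(k+1))$.

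For $J_2$, I would decompose $[\pi/(n+1),\pi]$ into the intervals $I_m=[\pi/(m+1),\pi/m]$, $m=1,\dots,n$, and apply H\"older on each $I_m$ with the same pair of weights. The kernel $L^{q}$-norm over $I_m$ is estimated by writing $K_n(t)$ as the sum of a low-index part ($k\le m$, controlled by $|D_k(t)|\ll 1/t$) and a high-index part ($k>m$, handled by Abel summation against the Fej\'er-type bound $|\sum_{j\le k}D_j(t)|\ll 1/t^{2}$). The MRBVS condition (\ref{4}) (resp.\ MHBVS condition (\ref{5})) lets me absorb the differences $|a_{n,k}-a_{n,k+1}|$ into the localized coefficient average $\frac{1}{m+1}\sum_{k=\lceil m/2\rceil}^{m}a_{n,k}$ (resp.\ $\frac{1}{m+1}\sum_{k=n-m}^{n}a_{n,n-k}$). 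Matching the resulting power of $m+1$ against the weight $(m+1)^{\beta+1-2/q}$ appearing in (\ref{2.8}) and summing in $m$ yields $|J_2|\ll (n+1)^{\beta}\sum_{k=0}^{n}a_{n,k}\omega(\pi/(k+1))$, and the MHBVS case is symmetric with $a_{n,n-k}$ in place of $a_{n,k}$.

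The principal obstacle is the careful exponent bookkeeping inside the dyadic sum over $m$: the powers of $m+1$ contributed by $\sin^{-\beta q}(t/2)\asymp t^{-\beta q}$, by the length $|I_m|\asymp m^{-2}$, by the Dirichlet/Fej\'er kernel bounds, and by the MRBVS/MHBVS inequality must combine to produce exactly the weight $(m+1)^{\beta+1-2/q}$ visible in (\ref{2.8}); the assumption $\beta<1-1/p$ is used both in $J_1$ and in ensuring absolute convergence of the auxiliary power-sums in $J_2$. Once this accounting is verified, the substitution of (\ref{2.8}) for (\ref{2.6}) simply removes one factor of $(n+1)^{1/p}$ from the final bound, producing the improved estimate stated in the theorem.
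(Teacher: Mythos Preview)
Your outline matches the paper's argument essentially step for step: the same splitting of the integral, the same H\"older application with the $\sin^{\beta}(t/2)$ weight, and the same use of (\ref{2.7}) for the near-origin piece and (\ref{2.8}) for the tail; the paper simply packages your inline Abel-summation argument as Lemma~2, obtaining $|K_n(t)|\ll t^{-1}A_{n,\tau}$ (resp.\ $t^{-1}\overline{A}_{n,n-2\tau}$) in one stroke. The one step you gloss over is how the sum over $m$ actually produces the factor $\sum_{k}a_{n,k}\,\omega(\pi/(k+1))$: after bounding the kernel, the paper writes $A_{n,m}=\sum_{k\le m}a_{n,k}$, interchanges the order of summation $\sum_{m}\sum_{k\le m}=\sum_{k}\sum_{m\ge k}$, and then uses the monotonicity $\omega(\pi/(m-1))\le\omega(\pi/(k-1))$ for $m\ge k$ to pull $\omega(\pi/(k+1))$ outside the inner $m$-sum \emph{before} invoking (\ref{2.8}); without this Fubini-plus-monotonicity maneuver the quantity $\omega(\pi/m)A_{n,m}$ sits inside the $m$-sum and (\ref{2.8}) cannot be applied directly, so make that step explicit in your write-up.
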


Next, we formulate the result on estimates in the $L^{p}$ norm of the
deviation considered above.

\begin{theorem}
Let \ $f\in L^{p}\left( \omega \right) _{\beta }$ $\left( 1<p<\infty \right) 
$\ with $0\leq \beta <1-\frac{1}{p}$. If $\left( a_{n,k}\right)
_{k=0}^{n}\in MRBVS$,\ then 
\begin{equation*}
\left\Vert T_{n,A}^{\text{ }}f\left( \cdot \right) -f\left( \cdot \right)
\right\Vert _{L^{p}}=\left\{ 
\begin{array}{c}
O\left( \left( n+1\right) ^{\beta }\sum_{k=0}^{n}a_{n,k}\omega \left( \frac{%
\pi }{k+1}\right) \right) \text{ \ \ \ \ for \ \ }\beta >0, \\ 
O\left( \left( n+1\right) ^{\frac{1}{p}}\sum_{k=0}^{n}a_{n,k}\omega \left( 
\frac{\pi }{k+1}\right) \right) \text{ \ \ \ \ for \ \ }\beta =0,%
\end{array}%
\right.
\end{equation*}%
and if $\left( a_{n,k}\right) _{k=0}^{n}\in MHBVS$,\ then 
\begin{equation*}
\left\Vert T_{n,A}^{\text{ }}f\left( \cdot \right) -f\left( \cdot \right)
\right\Vert _{L^{p}}=\left\{ 
\begin{array}{c}
O\left( \left( n+1\right) ^{\beta }\sum_{k=0}^{n}a_{n,n-k}\omega \left( 
\frac{\pi }{k+1}\right) \right) \text{ \ \ \ \ for\ \ \ }\beta >0, \\ 
O\left( \left( n+1\right) ^{\frac{1}{p}}\sum_{k=0}^{n}a_{n,n-k}\omega \left( 
\frac{\pi }{k+1}\right) \right) \text{ \ \ \ \ for\ \ \ }\beta =0.%
\end{array}%
\right.
\end{equation*}
\end{theorem}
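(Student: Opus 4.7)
The strategy is to emulate the arguments establishing Theorems 1 and 2 while taking the $L^{p}(dx)$ norm at the appropriate stage via Minkowski's integral inequality and Fubini's theorem. The point of departure is the integral representation
\[
T_{n,A}f(x)-f(x)=\frac{1}{\pi}\int_{0}^{\pi}\varphi_{x}(t)\sum_{k=0}^{n}a_{n,k}D_{k}(t)\,dt,
\]
with $D_{k}$ the Dirichlet kernel, that underlies the proofs of Theorems 1 and 2. As in those proofs the integral is split at $2\pi/(n+1)$: a short piece on $[0,2\pi/(n+1)]$, controlled by the trivial bound $\left|\sum_{k}a_{n,k}D_{k}(t)\right|\ll n+1$, and a long piece dyadically decomposed into intervals $[\pi/(m+1),\pi/m]$ for $m=1,\ldots,n$. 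On the long piece, Abel summation combined with the $MRBVS$ or $MHBVS$ hypothesis supplies the same kernel estimates used in Theorems 1 and 2. Crucially, membership in $L^{p}(\omega)_{\beta}$ provides the integrated replacement
\[
\left(\int_{0}^{\pi}|\varphi_{x}(t)|^{p}dx\right)^{1/p}\ll \omega(t)\sin^{-\beta}(t/2)
\]
for the pointwise conditions (\ref{2.6})--(\ref{2.8}).

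I would next take the $L^{p}(dx)$ norm of the representation by Minkowski's integral inequality and substitute the displayed bound for $\|\varphi_{\cdot}(t)\|_{L^{p}}$. The short piece then becomes $\ll (n+1)\int_{0}^{2\pi/(n+1)}\omega(t)t^{-\beta}\,dt\ll (n+1)^{\beta}\omega(\pi/(n+1))$ (using monotonicity of $\omega$), which is absorbed by $(n+1)^{\beta}\sum_{k}a_{n,k}\omega(\pi/(k+1))$ since $\sum_{k}a_{n,k}=1$ and $\omega(\pi/(k+1))\geq\omega(\pi/(n+1))$ for every $k\leq n$. For each dyadic piece on the long range, Fubini yields
\[
\left\|\left\{\int_{\pi/(m+1)}^{\pi/m}\left(\frac{|\varphi_{\cdot}(t)|}{\omega(t)}\right)^{p}\sin^{\beta p}\frac{t}{2}\,dt\right\}^{1/p}\right\|_{L^{p}}\ll (m+1)^{-2/p},
\]
so the weighted dyadic factor $(m+1)^{\beta+1-2/q}$ appearing in (\ref{2.6}) and (\ref{2.8}) reduces, upon summation over $m$, to $\sum_{m=1}^{n}(m+1)^{\beta-1}$. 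This sum is $O((n+1)^{\beta})$ when $\beta>0$ and $O(\log(n+1))=O((n+1)^{1/p})$ when $\beta=0$, which matches exactly the case distinction in the conclusion: the integrated analogue of the sharp hypothesis (\ref{2.8}) is available when $\beta>0$, whereas for $\beta=0$ one is forced back to the weaker (\ref{2.6}). The short-range condition (\ref{2.7}) is verified in integrated form analogously and poses no obstacle.

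Plugging these integrated hypotheses back into the kernel decomposition from the proofs of Theorems 1 and 2 then yields the sums $\sum_{k=0}^{n}a_{n,k}\omega(\pi/(k+1))$ in the $MRBVS$ case and $\sum_{k=0}^{n}a_{n,n-k}\omega(\pi/(k+1))$ in the $MHBVS$ case, with the claimed $(n+1)^{\beta}$ or $(n+1)^{1/p}$ prefactor. The main obstacle is the careful bookkeeping during the Minkowski/Fubini interchanges: the constants must remain uniform in $x$ rather than inheriting the pointwise $O_{x}$ dependence of Theorems 1--2, and this is precisely where the uniform-in-$x$ control on $\|\varphi_{\cdot}(t)\|_{L^{p}}$ coming from $f\in L^{p}(\omega)_{\beta}$ is essential. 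A secondary but unavoidable subtlety is the borderline $\beta=0$, where the logarithm produced by the inner dyadic sum must be absorbed into $(n+1)^{1/p}$, accounting for the bifurcation in the statement.
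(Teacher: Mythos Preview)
Your proposal is correct and follows essentially the same route as the paper: repeat the decomposition and kernel estimates from the proofs of Theorems~1 and~2, take the $L^{p}(dx)$ norm via Minkowski/Fubini, and replace the pointwise hypotheses (\ref{2.6})--(\ref{2.8}) by their integrated analogues, which hold automatically from $f\in L^{p}(\omega)_{\beta}$; the paper's own proof says exactly this in three terse sentences, and you have simply made the mechanism (including the origin of the $\beta>0$ versus $\beta=0$ split) explicit.
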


Finally, we give some remarks and corollary.

\begin{remark}
Analyzing the proofs of our Theorems we can deduce that under the additional
assumption $A_{n,k}=O\left( \frac{k}{n+1}\right) $ or $\overline{A}%
_{n,k}=O\left( \frac{k}{n+1}\right) $\ we obtain the expression $\omega
\left( \frac{\pi }{n+1}\right) $ instead of $\ \sum_{k=0}^{n}a_{n,k}\omega
\left( \frac{\pi }{k+1}\right) $ or $\sum_{k=0}^{n}a_{n,n-k}\omega \left( 
\frac{\pi }{k+1}\right) ,$ respectively, in the estimates of our deviation $%
T_{n,A}^{\text{ }}f-f$ .
\end{remark}

\begin{remark}
We can observe that taking $a_{n,k}=\frac{1}{n+1}\sum_{\nu =k}^{n}\frac{%
p_{\nu -k}}{P_{\nu }}$ we obtain the mean considered in \cite{SL} and if $%
\left( p_{\nu }\right) $ is monotonic with respect to $\nu $ then $\left(
a_{n,k}\right) _{k=0}^{n}\in MRBVS.$ Moreover, under the assumptions of
Theorem 1 on a function $f$, if $\left( p_{\nu }\right) $ is \textit{%
nonincreasing sequence such that the condition}%
\begin{equation*}
P_{\tau }\sum\limits_{\nu =\tau }^{n}P_{\nu }^{-1}=O\left( \tau \right) 
\end{equation*}%
\textit{holds,} then from Remark 1, using Lemma 3, we can obtain the correct
form of the result of S. Lal \cite{SL}.
\end{remark}

\begin{corollary}
If $f\in Lip\left( \alpha ,p\right) :=\left\{ g\in L^{p}:\omega g\left(
\delta \right) _{L^{p}}\ll \delta ^{\alpha }\right\} $ $\left( 0<\alpha \leq
1,\text{ }1\leq p<\infty \right) $, then $f\in L^{p}\left( \omega \right)
_{\beta }$ with $\omega \left( \delta \right) =\delta ^{\alpha +\beta },$
where $\alpha +\beta \leq 1$. Taking the Fejer mean, i.e. $a_{n,k}=\frac{1}{%
n+1}$ for $k=0,1,...,n$ and $a_{n,k}=0$ for $k>n$, we get from Theorem 3, by
Remark 1 with $\alpha \in (0,1),$ the following estimate%
\begin{equation*}
\left\Vert T_{n,\left( \frac{1}{n+1}\right) }^{\text{ }}f\left( \cdot
\right) -f\left( \cdot \right) \right\Vert _{L^{p}}=O\left( \frac{1}{\left(
n+1\right) ^{\alpha }}\right) 
\end{equation*}%
for $\beta \in \left( 0,1-\frac{1}{p}\right) $, where $1<p<\infty $. So, we
obtain the best order of approximation on wider class of functions.
\end{corollary}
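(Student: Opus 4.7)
The plan is to read the corollary as a direct specialization of Theorem~3, with the sum $\sum_{k=0}^{n}a_{n,k}\omega(\pi/(k+1))$ collapsed to $\omega(\pi/(n+1))$ via Remark~1. Three things will have to be checked: (i) the class inclusion $Lip(\alpha,p)\subset L^{p}(\omega)_{\beta}$ with $\omega(\delta)=\delta^{\alpha+\beta}$, (ii) the membership $(a_{n,k})_{k=0}^{n}=(1/(n+1))_{k=0}^{n}\in MRBVS$ for the Fej\'er entries, and (iii) the growth estimate $A_{n,k}=O(k/(n+1))$ that triggers Remark~1.

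For (i) I will use $|\sin(t/2)|\leq |t|/2\leq \delta/2$ whenever $|t|\leq\delta$ to estimate
\[
\omega_{\beta}f(\delta)_{L^{p}}\leq \delta^{\beta}\sup_{0\leq |t|\leq \delta}\Bigl\{\int_{0}^{\pi}|\varphi_{x}(t)|^{p}\,dx\Bigr\}^{1/p}=\delta^{\beta}\,\omega f(\delta)_{L^{p}}\ll \delta^{\alpha+\beta}.
\]
Because $\alpha+\beta\leq 1$, the function $\omega(\delta)=\delta^{\alpha+\beta}$ is of modulus-of-continuity type, so $f\in L^{p}(\omega)_{\beta}$. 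For (ii), the Fej\'er entries are constant in $k$ on $\{0,\ldots,n\}$, so every difference $a_{n,k}-a_{n,k+1}$ with $0\leq k\leq n-1$ vanishes; consequently (\ref{4}) is satisfied trivially and $(a_{n,k})_{k=0}^{n}\in MRBVS$. For (iii), $A_{n,k}=(k+1)/(n+1)\ll k/(n+1)$ uniformly for $k\geq 1$, so the hypothesis of Remark~1 is met.

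Combining these ingredients, the first branch of Theorem~3 (the case $\beta>0$, $\beta\in(0,1-1/p)$) together with Remark~1 yields
\[
\Bigl\|T_{n,(1/(n+1))}^{\text{ }}f(\cdot)-f(\cdot)\Bigr\|_{L^{p}}\ll (n+1)^{\beta}\,\omega\!\left(\frac{\pi}{n+1}\right)\ll (n+1)^{\beta}(n+1)^{-\alpha-\beta}=(n+1)^{-\alpha},
\]
which is exactly the stated rate. The only bookkeeping worth double-checking is that the parameter window $\alpha\in(0,1)$, $\beta\in(0,1-1/p)$, $\alpha+\beta\leq 1$ is nonempty for every $p>1$ (take $\beta$ small); once that is noted, there is no genuine obstacle. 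The whole argument is a routine consistency check of Theorem~3 and Remark~1 against the Fej\'er means on the classical Lipschitz classes, which is why the corollary is billed as an improvement (a wider class of $f$) rather than as new hard analysis.
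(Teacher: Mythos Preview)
Your proposal is correct and follows exactly the route the paper intends: the corollary is stated without a separate proof, since it is nothing more than the specialization of Theorem~3 (case $\beta>0$) together with Remark~1 to the Fej\'er matrix and the choice $\omega(\delta)=\delta^{\alpha+\beta}$. Your verifications of the three ingredients---the inclusion $Lip(\alpha,p)\subset L^{p}(\omega)_{\beta}$ via $|\sin(t/2)|\leq|t|/2$, the trivial $MRBVS$ membership of the constant row $(1/(n+1))_{k=0}^{n}$, and $A_{n,k}=(k+1)/(n+1)\ll k/(n+1)$ for $k\geq 1$---are precisely the checks the paper leaves implicit.
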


We can also deduce further remarks.

\begin{remark}
Analyzing the proof of Theorem 1 we can deduce that taking the assumption $%
\left( a_{n,k}\right) _{k=0}^{n}\in RBVS$ or $\left( a_{n,k}\right)
_{k=0}^{n}\in HBVS$ $\ $instead of $\left( a_{n,k}\right) _{k=0}^{n}\in
MRBVS $ or $\left( a_{n,k}\right) _{k=0}^{n}\in MHBVS\ $, respectively,\ we
obtain the results from the paper \cite{LS}.
\end{remark}

\begin{remark}
In the case $p\geq 1\left( \text{specially if \ }p=1\right) $ we can suppose
that the expression\ $t^{-\beta }\omega \left( t\right) $ is nondecreasing
in $t$ instead of the assumption $\beta <1-\frac{1}{p}.$
\end{remark}

\section{Auxiliary results}

We begin\ this section by some notations following A. Zygmund (\cite{Z}). It
is clear that 
\begin{equation*}
S_{k}f\left( x\right) =\frac{1}{\pi }\int_{-\pi }^{\pi }f\left( x+t\right)
D_{k}\left( t\right) dt
\end{equation*}
and 
\begin{equation*}
T_{n,A}^{\text{ }}f\left( x\right) =\frac{1}{\pi }\int_{-\pi }^{\pi }f\left(
x+t\right) \sum_{k=0}^{n}a_{n,k}D_{k}\left( t\right) dt,
\end{equation*}
where 
\begin{equation*}
D_{k}\left( t\right) =\frac{1}{2}+\sum_{\nu =1}^{k}\cos \nu t=\left\{ 
\begin{array}{c}
\frac{\sin \frac{\left( 2k+1\right) t}{2}}{2\sin \frac{t}{2}}\text{ \ \ for
\ }t\neq 2l\pi , \\ 
k+\frac{1}{2}\text{ \ \ for \ \ }t=2l\pi .%
\end{array}
\right.
\end{equation*}
Hence 
\begin{equation*}
T_{n,A}^{\text{ }}f\left( x\right) -f\left( x\right) =\frac{1}{\pi }%
\int_{0}^{\pi }\varphi _{x}\left( t\right) \sum_{k=0}^{n}a_{n,k}D_{k}\left(
t\right) dt.
\end{equation*}
Next, we present the known estimates for the Dirichlet kernel.

\begin{lemma}
\cite{Z} If \ $0<\left\vert t\right\vert \leq \pi $ then 
\begin{equation*}
\left\vert D_{k}\left( t\right) \right\vert \leq \frac{\pi }{\left\vert
t\right\vert }\text{ \ }
\end{equation*}%
and, for any real $t,$ we have 
\begin{equation*}
\left\vert D_{k}\left( t\right) \right\vert \leq k+1\text{ \ }.
\end{equation*}
\end{lemma}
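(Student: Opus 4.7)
The plan is to give both bounds directly from the closed-form expression for $D_{k}$ that has already been recorded in the excerpt, so no essentially new ingredients are needed.

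For the uniform bound $|D_{k}(t)|\leq k+1$ I would simply apply the triangle inequality to the definition $D_{k}(t)=\tfrac{1}{2}+\sum_{\nu=1}^{k}\cos\nu t$. Since $|\cos\nu t|\leq 1$ for every real $t$, we immediately get $|D_{k}(t)|\leq\tfrac{1}{2}+k\leq k+1$. (This also accounts for the degenerate case $t=2l\pi$ where the kernel equals $k+\tfrac{1}{2}$.)

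For the off-diagonal bound $|D_{k}(t)|\leq\pi/|t|$ on $0<|t|\leq\pi$ I would use the other representation $D_{k}(t)=\sin\bigl((2k+1)t/2\bigr)/(2\sin(t/2))$. I bound the numerator trivially by $1$, and for the denominator I invoke Jordan's inequality $\sin x\geq\tfrac{2x}{\pi}$ on $0\leq x\leq\pi/2$, which applied to $x=|t|/2\in[0,\pi/2]$ gives $|\sin(t/2)|\geq |t|/\pi$. Combining the two yields $|D_{k}(t)|\leq 1/(2|t|/\pi)=\pi/(2|t|)\leq\pi/|t|$, as desired.

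There is no real obstacle here; the only point one has to be careful about is the domain of Jordan's inequality, which is exactly why the statement restricts the first bound to $0<|t|\leq\pi$. Since both estimates are classical (and the author cites Zygmund), I would present the argument in the compact two-line form above rather than elaborate further.
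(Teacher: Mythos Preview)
Your argument is correct: the triangle inequality gives the uniform bound $|D_k(t)|\leq \tfrac12+k\leq k+1$, and Jordan's inequality applied to the closed form yields $|D_k(t)|\leq \pi/(2|t|)\leq \pi/|t|$ for $0<|t|\leq\pi$. The paper itself does not prove this lemma at all---it simply cites Zygmund \cite{Z} as the source---so your two-line verification is exactly the kind of standard computation the authors are taking for granted.
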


More complicated estimates we give with proofs.

\begin{lemma}
(cf. \cite{L, LS, S}) If $\left( a_{n,k}\right) _{k=0}^{n}\in MHBVS$, then 
\begin{equation*}
\left\vert \sum_{k=0}^{n}a_{n,k}D_{k}\left( t\right) \right\vert =O\left(
t^{-1}\overline{A}_{n,n-2\tau }\right) \text{,}
\end{equation*}%
and if $\left( a_{n,k}\right) _{k=0}^{n}\in MRBVS$ , then 
\begin{equation*}
\left\vert \sum_{k=0}^{n}a_{n,k}D_{k}\left( t\right) \right\vert =O\left(
t^{-1}A_{n,\tau }\right) ,
\end{equation*}%
for $\frac{2\pi }{n}\leq t\leq \pi $\ $\left( n=2,3,...\right) ,$ where $%
\tau =\left[ \pi /t\right] .$
\end{lemma}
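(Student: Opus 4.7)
The plan is to split $\sum_{k=0}^{n}a_{n,k}D_{k}(t)$ at an index matched to the structural assumption on $(a_{n,k})$, apply the pointwise estimate $|D_{k}(t)|\leq\pi/t$ from Lemma~1 on one piece, and handle the other by Abel summation against the cumulative kernel $\tilde{D}_{k}(t)=\sum_{j=0}^{k}D_{j}(t)=\sin^{2}((k+1)t/2)/(2\sin^{2}(t/2))$, which satisfies $|\tilde{D}_{k}(t)|\leq \pi^{2}/(2t^{2})$ on $(0,\pi]$.

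In the MRBVS case I would split at $\tau$: the first block $\sum_{k=0}^{\tau}a_{n,k}D_{k}(t)$ is bounded directly by $(\pi/t)A_{n,\tau}$, which already matches the claim. For the remainder I would apply Abel summation; the variation sum $\sum_{k\geq\tau+1}|a_{n,k}-a_{n,k+1}|$ is controlled by the MRBVS inequality (\ref{2}) with $m=\tau+1$, while the two boundary terms $a_{n,\tau+1}$ and $a_{n,n}$ telescope into tails of the same variation (using $a_{n,k}=0$ for $k>n$), so each is also bounded by $\frac{K}{\tau+2}A_{n,\tau+1}$. Combined with $|\tilde{D}_{k}|\leq C/t^{2}$ and $\tau+2\geq\pi/t$, the factor $1/(\tau+2)$ absorbs one power of $1/t$ and the remainder is $O(A_{n,\tau}/t)$; the replacement of $A_{n,\tau+1}$ by $A_{n,\tau}$ again follows from MRBVS.

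For the MHBVS case the argument is symmetric, with the split moved to the opposite end. Writing $\sum_{k=0}^{n}a_{n,k}D_{k}(t)=\sum_{k=0}^{n-2\tau-1}a_{n,k}D_{k}(t)+\sum_{k=n-2\tau}^{n}a_{n,k}D_{k}(t)$, the tail piece is $\leq(\pi/t)\overline{A}_{n,n-2\tau}$ from Lemma~1. I would then apply Abel summation to the head sum; the variation $\sum_{j=0}^{n-2\tau-1}|a_{n,j}-a_{n,j+1}|$ is bounded by $\frac{K}{2\tau+1}\overline{A}_{n,n-2\tau}$ via MHBVS (\ref{3}) with $m=2\tau$, and the identity $2\tau+1\geq\pi/t$ then converts this contribution to the target order $O(\overline{A}_{n,n-2\tau}/t)$.

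The hard part is the boundary term $a_{n,n-2\tau}\tilde{D}_{n-2\tau-1}(t)$ which appears after Abel in the MHBVS case: the naive bound $|a_{n,n-2\tau}|\cdot C/t^{2}$ is one factor of $t$ too large. To recover the missing power of $t$ I would extract the pointwise estimate $a_{n,n-2\tau}=O(\overline{A}_{n,n-2\tau}/(2\tau+1))$ from MHBVS itself, by combining the head-variation bound (which forces all head values to be concentrated in an interval of length $\leq K\overline{A}_{n,n-2\tau}/(2\tau+1)$) with the normalization $\sum_{k=0}^{n}a_{n,k}=1$. With this auxiliary pointwise estimate in hand, all three contributions align at the correct order $O(\overline{A}_{n,n-2\tau}/t)$ and the lemma follows.
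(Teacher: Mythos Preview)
Your overall strategy coincides with the paper's: split at an index matched to $\tau$, bound one block trivially via $|D_{k}(t)|\leq\pi/t$, and treat the other by Abel summation together with the structural variation hypothesis. Your use of the cumulative kernel $\tilde D_{k}$ with $|\tilde D_{k}(t)|\ll t^{-2}$ is equivalent to the paper's telescoping identity $2\sin\frac{(2l+1)t}{2}\sin\frac{t}{2}=\cos lt-\cos(l+1)t$; and the paper splits the MHBVS case at $n-\tau$ rather than $n-2\tau$, but either cut leads to the same final bound.

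There is, however, a genuine gap in your treatment of the boundary term in the MHBVS case. You need $a_{n,n-2\tau}=O\bigl(\overline{A}_{n,n-2\tau}/\tau\bigr)$, and you propose to deduce this from the fact that the head values $a_{n,0},\dots,a_{n,n-2\tau}$ lie in an interval of length $\leq K\overline{A}_{n,n-2\tau}/(2\tau+1)$ together with the normalization $\sum_{k}a_{n,k}=1$. That does not work: concentration of the head only gives $a_{n,n-2\tau}\leq \frac{1}{n-2\tau}A_{n,n-2\tau-1}+\frac{K}{2\tau+1}\overline{A}_{n,n-2\tau}$, and there is no a~priori comparison between $\frac{1}{n-2\tau}A_{n,n-2\tau-1}$ and $\frac{1}{2\tau+1}\overline{A}_{n,n-2\tau}$; the normalization contributes nothing further. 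What does work is the averaging trick the paper uses for its own boundary term $a_{n,n-\tau}$, applied at your index: MHBVS with $m=\tau$ yields
\[
|a_{n,n-2\tau}-a_{n,s}|\leq\sum_{k=0}^{n-\tau-1}|a_{n,k}-a_{n,k+1}|\leq\frac{K}{\tau+1}\overline{A}_{n,n-\tau}\qquad(n-2\tau<s\leq n-\tau),
\]
and averaging over the $\tau$ values $s=n-2\tau+1,\dots,n-\tau$ gives
\[
a_{n,n-2\tau}\leq\frac{1}{\tau}\sum_{s=n-2\tau+1}^{n-\tau}a_{n,s}+\frac{K}{\tau+1}\overline{A}_{n,n-\tau}\ll\frac{1}{\tau}\overline{A}_{n,n-2\tau}.
\]
With this correction your argument closes; note that no appeal to $\sum_{k}a_{n,k}=1$ is needed anywhere.
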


\begin{proof}
Using partial summation 
\begin{equation*}
\left\vert \sum_{k=0}^{n}a_{n,k}\sin \frac{\left( 2k+1\right) t}{2}\sin 
\frac{t}{2}\right\vert 
\end{equation*}%
\begin{equation*}
\leq tA_{n,\tau }+\frac{1}{2}\sum_{k=\tau -1}^{n-1}\left\vert
a_{n,k}-a_{n,k+1}\right\vert \left\vert \sum_{l=\tau -1}^{k}2\sin \frac{%
\left( 2l+1\right) t}{2}\sin \frac{t}{2}\right\vert 
\end{equation*}%
\begin{equation*}
+\frac{1}{2}a_{n,n}\left\vert \sum_{l=\tau -1}^{n}2\sin \frac{\left(
2l+1\right) t}{2}\sin \frac{t}{2}\right\vert 
\end{equation*}%
\begin{equation*}
\leq tA_{n,\tau }+\frac{1}{2}\sum_{k=\tau -1}^{n-1}\left\vert
a_{n,k}-a_{n,k+1}\right\vert \left\vert \sum_{l=\tau -1}^{k}\left( \cos
lt-\cos \left( l+1\right) t\right) \right\vert 
\end{equation*}%
\begin{equation*}
+\frac{1}{2}a_{n,n}\left\vert \sum_{l=\tau -1}^{n}\left( \cos lt-\cos \left(
l+1\right) t\right) \right\vert 
\end{equation*}%
\begin{equation*}
\leq tA_{n,\tau }+\frac{1}{2}\sum_{k=\tau -1}^{n-1}\left\vert
a_{n,k}-a_{n,k+1}\right\vert \left\vert \cos \left( \tau -1\right) t-\cos
\left( k+1\right) t\right\vert 
\end{equation*}%
\begin{equation*}
+\frac{1}{2}a_{n,n}\left\vert \cos \left( \tau -1\right) t-\cos \left(
n+1\right) t\right\vert 
\end{equation*}%
\begin{equation*}
\leq tA_{n,\tau }+\sum_{k=\tau }^{n-1}\left\vert
a_{n,k}-a_{n,k+1}\right\vert +a_{n,n}
\end{equation*}%
or 
\begin{eqnarray*}
&&\left\vert \sum_{k=0}^{n}a_{n,k}\sin \frac{\left( 2k+1\right) t}{2}\sin 
\frac{t}{2}\right\vert  \\
&\leq &t\overline{A}_{n,n-\tau }+\frac{1}{2}\sum_{k=0}^{n-\tau -1}\left\vert
a_{n,k}-a_{n,k+1}\right\vert \left\vert \sum_{l=0}^{k}2\sin \frac{\left(
2l+1\right) t}{2}\sin \frac{t}{2}\right\vert  \\
&&+\frac{1}{2}a_{n,n-\tau }\text{ }\left\vert \sum_{l=0}^{n-\tau }2\sin 
\frac{\left( 2l+1\right) t}{2}\sin \frac{t}{2}\right\vert +a_{n,n-\tau } \\
&\leq &t\overline{A}_{n,n-\tau }+\sum_{k=0}^{n-\tau -1}\left\vert
a_{n,k}-a_{n,k+1}\right\vert +2a_{n,n-\tau }\text{.}
\end{eqnarray*}%
Because $\left( a_{n,k}\right) _{k=0}^{n}\in MRBVS$\ we have 
\begin{eqnarray*}
a_{n,s+1}-a_{n,m} &\leq &\left\vert a_{n,m}-a_{n,s+1}\right\vert \leq
\sum_{k=m}^{s}\left\vert a_{n,k}-a_{n,k+1}\right\vert  \\
&\leq &\sum_{k=r}^{n-1}\left\vert a_{n,k}-a_{n,k+1}\right\vert \ll \frac{1}{%
r+1}\sum_{k\geq r/2}^{r}a_{n,k}\text{ \ \ }\left( 0\leq r\leq m\leq
s<n\right) ,
\end{eqnarray*}%
whence 
\begin{equation*}
a_{n,s+1}\ll a_{n,m}+\frac{1}{r+1}\sum_{k\geq r/2}^{r}a_{n,k}\text{ \ }%
\left( 0\leq r\leq m\leq s<n\right) 
\end{equation*}%
and therefore 
\begin{eqnarray*}
&&\left\vert \sum_{k=0}^{n}a_{n,k}\sin \frac{\left( 2k+1\right) t}{2}\sin 
\frac{t}{2}\right\vert  \\
&\ll &tA_{n,\tau }+\frac{1}{\tau +1}\sum_{k\geq \tau /2}^{\tau }a_{n,k}+%
\frac{1}{\tau +1}\sum_{k\geq \tau /2}^{\tau }a_{n,n} \\
&\ll &tA_{n,\tau }+\frac{1}{\tau +1}\sum_{k\geq \tau /2}^{\tau }a_{n,k}+%
\frac{1}{\tau +1}\sum_{m\geq \tau /2}^{\tau }\left( a_{n,m}+\frac{1}{\tau +1}%
\sum_{k\geq \tau /4}^{\tau /2}a_{n,k}\right) \ll tA_{n,\tau }.
\end{eqnarray*}%
Analogously, the relation $\left( a_{n,k}\right) _{k=0}^{n}\in MHBVS$
implies 
\begin{eqnarray*}
a_{n,s}-a_{n,m} &\leq &\left\vert a_{n,m}-a_{n,s}\right\vert \leq
\sum_{k=m}^{s-1}\left\vert a_{n,k}-a_{n,k+1}\right\vert  \\
&\leq &\sum_{k=0}^{r-1}\left\vert a_{n,k}-a_{n,k+1}\right\vert  \\
&\ll &\frac{1}{n-r+1}\sum_{k=r}^{n}a_{n,k}\text{ \ }\left( 0\leq m<s\leq
r\leq n\right) 
\end{eqnarray*}%
and 
\begin{equation*}
a_{n,s}\ll a_{n,m}+\frac{1}{n-r+1}\sum_{k=r}^{n}a_{n,k}\text{ \ }\left(
0\leq m<s\leq r\leq n\right) ,
\end{equation*}%
whence 
\begin{eqnarray*}
&&\left\vert \sum_{k=0}^{n}a_{n,k}\sin \frac{\left( 2k+1\right) t}{2}\sin 
\frac{t}{2}\right\vert  \\
&\ll &t\overline{A}_{n,n-\tau }+\frac{1}{\tau +1}\sum_{k=n-\tau
}^{n}a_{n,k}+a_{n,n-\tau }\frac{1}{\tau }\sum_{m=n-2\tau }^{n-\tau -1}1 \\
&\ll &t\overline{A}_{n,n-\tau }+\frac{1}{\tau }\sum_{m=n-2\tau }^{n-\tau
-1}\left( a_{n,m}+\frac{1}{\tau +1}\sum_{k=n-\tau }^{n}a_{n,k}\right)  \\
&\leq &2t\overline{A}_{n,n-\tau }+\frac{1}{\tau }\sum_{k=n-2\tau }^{n-\tau
-1}a_{n,k}\ll t\overline{A}_{n,n-2\tau }\text{ \ \ }\left( \tau \leq \frac{n%
}{2}\right) .
\end{eqnarray*}

Thus our proof is complete.
\end{proof}

\begin{lemma}
Let \ $f\in L^{p}$ $\left( 1<p<\infty \right) $, $\beta \geq 0$ and $\ 0\leq
\gamma <\beta +\frac{1}{p}$ . If 
\begin{equation}
\left\{ \int_{\frac{\pi }{n+1}}^{\pi }\left( \frac{\left\vert \varphi
_{x}\left( t\right) \right\vert }{t^{\gamma }\omega \left( t\right) }\right)
^{p}\sin ^{\beta p}\frac{t}{2}dt\right\} ^{\frac{1}{p}}=O_{x}\left( \left(
n+1\right) ^{\gamma }\right) ,  \label{Q}
\end{equation}%
then 
\begin{equation*}
\sum\limits_{m=1}^{n}(m+1)^{\beta +1-\frac{2}{q}}\left\{ \int_{\frac{\pi }{%
m+1}}^{\frac{\pi }{m}}\left( \frac{\left\vert \varphi _{x}\left( t\right)
\right\vert }{\omega \left( t\right) }\right) ^{p}\sin ^{\beta p}\frac{t}{2}%
dt\right\} ^{1/p}=O_{x}\left( \left( n+1\right) ^{\beta +1/p}\right) .
\end{equation*}
\end{lemma}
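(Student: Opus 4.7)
The strategy is to pass from the full integral in hypothesis~(\ref{Q}) to the sum over the short intervals $[\pi/(m+1),\pi/m]$ by H\"older's inequality, exploiting the fact that $t^{\gamma}$ is essentially constant on each such interval. Writing
\[ I_m := \left\{\int_{\pi/(m+1)}^{\pi/m} \left(\frac{|\varphi_x(t)|}{\omega(t)}\right)^p \sin^{\beta p}\frac{t}{2}\,dt\right\}^{1/p} \]
and
\[ J_m := \left\{\int_{\pi/(m+1)}^{\pi/m} \left(\frac{|\varphi_x(t)|}{t^{\gamma}\omega(t)}\right)^p \sin^{\beta p}\frac{t}{2}\,dt\right\}^{1/p}, \]
the inequality $t\le \pi/m$ on the interval of integration, together with $\gamma\ge 0$, allows me to pull $t^{\gamma}$ outside and conclude $I_m \ll m^{-\gamma} J_m$.

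Next I would apply H\"older's inequality with exponents $p$ and $q=p/(p-1)$ to the sum in question:
\[ \sum_{m=1}^n (m+1)^{\beta+1-2/q}\, I_m \ll \sum_{m=1}^n (m+1)^{\beta+1-2/q-\gamma}\, J_m \le \left(\sum_{m=1}^n (m+1)^{q(\beta+1-2/q-\gamma)}\right)^{1/q} \left(\sum_{m=1}^n J_m^p\right)^{1/p}. \]
Since the intervals $[\pi/(m+1),\pi/m]$ for $m=1,\dots,n$ partition $[\pi/(n+1),\pi]$, the quantity $\sum_{m=1}^n J_m^p$ equals the full integral appearing in~(\ref{Q}), and is therefore $O_x((n+1)^{\gamma p})$.

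It remains to estimate the first factor. Using $1-1/p=1/q$, one checks the identity $q(\beta+1-2/q-\gamma)=q(\beta+1/p-\gamma)-1$, and the hypothesis $\gamma<\beta+1/p$ guarantees that this exponent is strictly greater than $-1$. Consequently
\[ \left(\sum_{m=1}^n (m+1)^{q(\beta+1/p-\gamma)-1}\right)^{1/q} \ll (n+1)^{\beta+1/p-\gamma}, \]
and multiplying the two factors gives $(n+1)^{\beta+1/p-\gamma}\cdot (n+1)^{\gamma}=(n+1)^{\beta+1/p}$, as required. The only nontrivial point is the exponent bookkeeping: the assumption $\gamma<\beta+1/p$ is precisely what makes the power sum after H\"older grow like $n^{q(\beta+1/p-\gamma)}$ and thereby cancel the extra factor $(n+1)^{\gamma}$ coming from~(\ref{Q}); once that is verified the rest is a routine application of H\"older's inequality.
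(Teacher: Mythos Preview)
Your proof is correct and follows essentially the same route as the paper: replace $t^{\gamma}$ by $(m+1)^{-\gamma}$ on each dyadic interval, apply H\"older's inequality with exponents $q$ and $p$ to the resulting sum, identify $\sum_m J_m^p$ with the full integral in~(\ref{Q}), and use $\gamma<\beta+1/p$ to make the power sum converge at the right rate. The exponent bookkeeping you carry out (rewriting $q(\beta+1-2/q-\gamma)$ as $q(\beta+1/p-\gamma)-1$) is exactly the computation the paper does, just presented in a slightly different form.
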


\begin{proof}
Let (\ref{Q}) holds. Then, using the H\"{o}lder inequality $\left( \frac{1}{p%
}+\frac{1}{q}=1\right) $, we have 
\begin{eqnarray*}
&&\sum\limits_{m=1}^{n}(m+1)^{\beta +1-\frac{2}{q}}\left\{ \int_{\frac{\pi }{%
m+1}}^{\frac{\pi }{m}}\left( \frac{\left\vert \varphi _{x}\left( t\right)
\right\vert }{\omega \left( t\right) }\right) ^{p}\sin ^{\beta p}\frac{t}{2}%
dt\right\} ^{1/p} \\
&\ll &\sum\limits_{m=1}^{n}(m+1)^{\beta -\gamma +1-\frac{2}{q}}\left\{ \int_{%
\frac{\pi }{m+1}}^{\frac{\pi }{m}}\left( \frac{\left\vert \varphi _{x}\left(
t\right) \right\vert }{t^{\gamma }\omega \left( t\right) }\right) ^{p}\sin
^{\beta p}\frac{t}{2}dt\right\} ^{1/p} \\
&\leq &\left\{ \sum\limits_{m=1}^{n}\left( (m+1)^{\beta -\gamma +1-\frac{2}{q%
}}\right) ^{q}\right\} ^{\frac{1}{q}}\left\{ \sum\limits_{m=1}^{n}\int_{%
\frac{\pi }{m+1}}^{\frac{\pi }{m}}\left( \frac{\left\vert \varphi _{x}\left(
t\right) \right\vert }{t^{\gamma }\omega \left( t\right) }\right) ^{p}\sin
^{\beta p}\frac{t}{2}dt\right\} ^{1/p} \\
&\leq &\left\{ \sum\limits_{m=1}^{n}(m+1)^{\beta q-\gamma q+q-2}\right\} ^{%
\frac{1}{q}}\left\{ \int_{\frac{\pi }{n+1}}^{\pi }\left( \frac{\left\vert
\varphi _{x}\left( t\right) \right\vert }{t^{\gamma }\omega \left( t\right) }%
\right) ^{p}\sin ^{\beta p}\frac{t}{2}dt\right\} ^{\frac{1}{p}} \\
&\ll &\left( n+1\right) ^{\beta +1/p}\text{ \ \ }
\end{eqnarray*}%
when $0\leq \gamma <\beta +\frac{1}{p}.$
\end{proof}

\section{Proofs of the results}

\subsection{Proof of Theorem 1}

Let 
\begin{eqnarray*}
T_{n,A}^{\text{ }}f\left( x\right) -f\left( x\right)  &=&\frac{1}{\pi }%
\int_{0}^{\frac{2\pi }{n}}\varphi _{x}\left( t\right)
\sum_{k=0}^{n}a_{n,k}D_{k}\left( t\right) dt \\
&&+\frac{1}{\pi }\int_{\frac{2\pi }{n}}^{\pi }\varphi _{x}\left( t\right)
\sum_{k=0}^{n}a_{n,k}D_{k}\left( t\right) dt \\
&=&I_{1}+I_{2},
\end{eqnarray*}%
where $n\geq 2$. Then 
\begin{equation*}
\left\vert T_{n,A}^{\text{ }}f\left( x\right) -f\left( x\right) \right\vert
\leq \left\vert I_{1}\right\vert +\left\vert I_{2}\right\vert .
\end{equation*}%
By the H\"{o}lder inequality $\left( \frac{1}{p}+\frac{1}{q}=1\right) ,$
Lemma 1 and (\ref{2.7})$,$ for $\beta <1-\frac{1}{p},$%
\begin{eqnarray*}
\left\vert I_{1}\right\vert  &\leq &\frac{\left( n+1\right) }{\pi }\int_{0}^{%
\frac{2\pi }{n}}\left\vert \varphi _{x}\left( t\right) \right\vert dt \\
&\leq &\frac{\left( n+1\right) }{\pi }\left\{ \int_{0}^{\frac{2\pi }{n}}%
\left[ \frac{\left\vert \varphi _{x}\left( t\right) \right\vert }{\omega
\left( t\right) }\sin ^{\beta }\frac{t}{2}\right] ^{p}dt\right\} ^{\frac{1}{p%
}}\left\{ \int_{0}^{\frac{2\pi }{n}}\left[ \frac{\omega \left( t\right) }{%
\sin ^{\beta }\frac{t}{2}}\right] ^{q}dt\right\} ^{\frac{1}{q}}
\end{eqnarray*}%
\begin{equation*}
\ll \left( n+1\right) ^{1-\frac{1}{p}}\omega \left( \frac{2\pi }{n}\right)
\left\{ \int_{0}^{\frac{2\pi }{n}}\left[ \frac{1}{t^{\beta }}\right]
^{q}dt\right\} ^{\frac{1}{q}}\ll \left( n+1\right) ^{\beta }\omega \left( 
\frac{\pi }{n+1}\right) .
\end{equation*}%
Suppose that $\left( a_{n,k}\right) _{k=0}^{n}\in MRBVS$. By the H\"{o}lder
inequality $\left( \frac{1}{p}+\frac{1}{q}=1\right) ,$ Lemma 2 and (\ref{2.6}%
) for an even $n$%
\begin{eqnarray*}
\left\vert I_{2}\right\vert  &\ll &\int_{\frac{2\pi }{n}}^{\pi }\frac{%
\left\vert \varphi _{x}\left( t\right) \right\vert }{t}\sum_{k=0}^{\tau
}a_{n,k}dt=\sum_{m=1}^{\frac{n}{2}-1}\int_{\frac{\pi }{m+1}}^{\frac{\pi }{m}}%
\frac{\left\vert \varphi _{x}\left( t\right) \right\vert }{t}%
\sum_{k=0}^{\tau }a_{n,k}dt \\
&\leq &\sum_{m=1}^{\frac{n}{2}-1}\sum_{k=0}^{m+1}a_{n,k}\int_{\frac{\pi }{m+1%
}}^{\frac{\pi }{m}}\frac{\left\vert \varphi _{x}\left( t\right) \right\vert 
}{t}dt\leq \sum_{m=2}^{\frac{n}{2}}\sum_{k=0}^{m}a_{n,k}\int_{\frac{\pi }{m}%
}^{\frac{\pi }{m-1}}\frac{\left\vert \varphi _{x}\left( t\right) \right\vert 
}{t}dt \\
&=&\sum_{k=2}^{\frac{n}{2}}a_{n,k}\sum_{m=k}^{\frac{n}{2}}\int_{\frac{\pi }{m%
}}^{\frac{\pi }{m-1}}\frac{\left\vert \varphi _{x}\left( t\right)
\right\vert }{t}dt+\sum_{k=0}^{1}a_{n,k}\sum_{m=2}^{\frac{n}{2}}\int_{\frac{%
\pi }{m}}^{\frac{\pi }{m-1}}\frac{\left\vert \varphi _{x}\left( t\right)
\right\vert }{t}dt
\end{eqnarray*}%
\begin{eqnarray*}
&\leq &\sum_{k=2}^{\frac{n}{2}}a_{n,k}\sum_{m=k}^{\frac{n}{2}}\left\{ \int_{%
\frac{\pi }{m}}^{\frac{\pi }{m-1}}\left[ \frac{\left\vert \varphi _{x}\left(
t\right) \right\vert }{\omega \left( t\right) }\sin ^{\beta }\frac{t}{2}%
\right] ^{p}dt\right\} ^{\frac{1}{p}}\left\{ \int_{\frac{\pi }{m}}^{\frac{%
\pi }{m-1}}\left[ \frac{\omega \left( t\right) }{t\sin ^{\beta }\frac{t}{2}}%
\right] ^{q}dt\right\} ^{\frac{1}{q}} \\
&&+\sum_{k=0}^{1}a_{n,k}\sum_{m=2}^{\frac{n}{2}}\left\{ \int_{\frac{\pi }{m}%
}^{\frac{\pi }{m-1}}\left[ \frac{\left\vert \varphi _{x}\left( t\right)
\right\vert }{\omega \left( t\right) }\sin ^{\beta }\frac{t}{2}\right]
^{p}dt\right\} ^{\frac{1}{p}}\left\{ \int_{\frac{\pi }{m}}^{\frac{\pi }{m-1}}%
\left[ \frac{\omega \left( t\right) }{t\sin ^{\beta }\frac{t}{2}}\right]
^{q}dt\right\} ^{\frac{1}{q}} \\
&\leq &\sum_{k=2}^{\frac{n}{2}}a_{n,k}\sum_{m=k}^{\frac{n}{2}}\omega \left( 
\frac{\pi }{m-1}\right) \left\{ \int_{\frac{\pi }{m}}^{\frac{\pi }{m-1}}%
\left[ \frac{\left\vert \varphi _{x}\left( t\right) \right\vert }{\omega
\left( t\right) }\sin ^{\beta }\frac{t}{2}\right] ^{p}dt\right\} ^{\frac{1}{p%
}}\left\{ \int_{\frac{\pi }{m}}^{\frac{\pi }{m-1}}t^{\left( -1-\beta \right)
q}dt\right\} ^{\frac{1}{q}} \\
&&+\sum_{k=0}^{1}a_{n,k}\sum_{m=2}^{\frac{n}{2}}\omega \left( \frac{\pi }{m-1%
}\right) \left\{ \int_{\frac{\pi }{m}}^{\frac{\pi }{m-1}}\left[ \frac{%
\left\vert \varphi _{x}\left( t\right) \right\vert }{\omega \left( t\right) }%
\sin ^{\beta }\frac{t}{2}\right] ^{p}dt\right\} ^{\frac{1}{p}}\left\{ \int_{%
\frac{\pi }{m}}^{\frac{\pi }{m-1}}t^{\left( -1-\beta \right) q}dt\right\} ^{%
\frac{1}{q}} \\
&\ll &\sum_{k=0}^{\frac{n}{2}}a_{n,k}\omega \left( \frac{\pi }{k+1}\right)
\sum_{m=2}^{\frac{n}{2}}m^{\beta +1-\frac{2}{q}}\left\{ \int_{\frac{\pi }{m}%
}^{\frac{\pi }{m-1}}\left[ \frac{\left\vert \varphi _{x}\left( t\right)
\right\vert }{\omega \left( t\right) }\sin ^{\beta }\frac{t}{2}\right]
^{p}dt\right\} ^{\frac{1}{p}}
\end{eqnarray*}%
\begin{equation*}
\ll (n+1)^{\beta +\frac{1}{p}}\sum_{k=0}^{n}a_{n,k}\omega \left( \frac{\pi }{%
k+1}\right) ,
\end{equation*}%
and for an odd $n$ $(n>2)$%
\begin{eqnarray*}
\left\vert I_{2}\right\vert  &\ll &\int_{\frac{2\pi }{n}}^{\pi }\frac{%
\left\vert \varphi _{x}\left( t\right) \right\vert }{t}\sum_{k=0}^{\tau
}a_{n,k}dt \\
&=&\sum_{m=1}^{\left[ \frac{n}{2}\right] -1}\int_{\frac{\pi }{m+1}}^{\frac{%
\pi }{m}}\frac{\left\vert \varphi _{x}\left( t\right) \right\vert }{t}%
\sum_{k=0}^{\tau }a_{n,k}dt+\int_{\frac{2\pi }{n}}^{\frac{\pi }{\left[ \frac{%
n}{2}\right] }}\frac{\left\vert \varphi _{x}\left( t\right) \right\vert }{t}%
\sum_{k=0}^{\tau }a_{n,k}dt=\sum\nolimits_{1}+\sum\nolimits_{2},
\end{eqnarray*}%
the first sum we can estimate similarly like for an even $n$\ and therefore 
\begin{eqnarray*}
\sum\nolimits_{1} &\ll &\sum_{k=0}^{\left[ \frac{n}{2}\right] }a_{n,k}\omega
\left( \frac{\pi }{k+1}\right) \sum_{m=2}^{\left[ \frac{n}{2}\right]
}\left\{ \int_{\frac{\pi }{m}}^{\frac{\pi }{m-1}}\left[ \frac{\left\vert
\varphi _{x}\left( t\right) \right\vert }{\omega \left( t\right) }\sin
^{\beta }\frac{t}{2}\right] ^{p}dt\right\} ^{\frac{1}{p}}\left\{ \int_{\frac{%
\pi }{m}}^{\frac{\pi }{m-1}}t^{\left( -1-\beta \right) q}dt\right\} ^{\frac{1%
}{q}} \\
&\ll &(n+1)^{\beta +\frac{1}{p}}\sum_{k=0}^{n}a_{n,k}\omega \left( \frac{\pi 
}{k+1}\right) .
\end{eqnarray*}%
For the second sum we have 
\begin{equation*}
\sum\nolimits_{2}\leq \sum_{k=0}^{\left[ \frac{n}{2}\right] }a_{n,k}\int_{%
\frac{2\pi }{n}}^{\frac{\pi }{\left[ \frac{n}{2}\right] }}\frac{\left\vert
\varphi _{x}\left( t\right) \right\vert }{t}dt\leq \sum_{k=0}^{\left[ \frac{n%
}{2}\right] }a_{n,k}\int_{\frac{\pi }{\left[ \frac{n}{2}\right] +1}}^{\frac{%
\pi }{\left[ \frac{n}{2}\right] }}\frac{\left\vert \varphi _{x}\left(
t\right) \right\vert }{t}dt
\end{equation*}%
\begin{eqnarray*}
&\leq &\sum_{k=0}^{\left[ \frac{n}{2}\right] }a_{n,k}\left\{ \int_{\frac{\pi 
}{\left[ \frac{n}{2}\right] +1}}^{\frac{\pi }{\left[ \frac{n}{2}\right] }}%
\left[ \frac{\left\vert \varphi _{x}\left( t\right) \right\vert }{\omega
\left( t\right) }\sin ^{\beta }\frac{t}{2}\right] ^{p}dt\right\} ^{\frac{1}{p%
}}\left\{ \int_{\frac{\pi }{\left[ \frac{n}{2}\right] +1}}^{\frac{\pi }{%
\left[ \frac{n}{2}\right] }}\left[ \frac{\omega \left( t\right) }{t\sin
^{\beta }\frac{t}{2}}\right] ^{q}dt\right\} ^{\frac{1}{q}} \\
&\ll &\sum_{k=0}^{\left[ \frac{n}{2}\right] }a_{n,k}\omega \left( \frac{\pi 
}{n+1}\right) \left( \left[ \frac{n}{2}\right] +1\right) ^{1+\beta -\frac{2}{%
q}}\left\{ \int_{\frac{\pi }{\left[ \frac{n}{2}\right] +1}}^{\frac{\pi }{%
\left[ \frac{n}{2}\right] }}\left[ \frac{\left\vert \varphi _{x}\left(
t\right) \right\vert }{\omega \left( t\right) }\sin ^{\beta }\frac{t}{2}%
\right] ^{p}dt\right\} ^{\frac{1}{p}} \\
&\ll &\sum_{k=0}^{\left[ \frac{n}{2}\right] }a_{n,k}\left( \left[ \frac{n}{2}%
\right] +1\right) ^{\beta +\frac{1}{p}}\omega \left( \frac{\pi }{n+1}\right)
\ll (n+1)^{\beta +\frac{1}{p}}\sum_{k=0}^{n}a_{n,k}\omega \left( \frac{\pi }{%
k+1}\right) .
\end{eqnarray*}%
Consequently, let $\left( a_{n,k}\right) _{k=0}^{n}\in MHBVS$. By the H\"{o}%
lder inequality $\left( \frac{1}{p}+\frac{1}{q}=1\right) ,$ Lemma 2 and (\ref%
{2.6}) for an even $n$%
\begin{eqnarray*}
\left\vert I_{2}\right\vert  &\ll &\int_{\frac{2\pi }{n}}^{\pi }\frac{%
\left\vert \varphi _{x}\left( t\right) \right\vert }{t}\sum_{k=n-2\tau
}^{n}a_{n,k}dt=\sum_{m=1}^{\frac{n}{2}-1}\int_{\frac{\pi }{m+1}}^{\frac{\pi 
}{m}}\frac{\left\vert \varphi _{x}\left( t\right) \right\vert }{t}%
\sum_{k=0}^{\tau }a_{n,n-2k}dt \\
&\leq &\sum_{k=2}^{\frac{n}{2}}a_{n,n-2k}\sum_{m=k}^{\frac{n}{2}}\int_{\frac{%
\pi }{m}}^{\frac{\pi }{m-1}}\frac{\left\vert \varphi _{x}\left( t\right)
\right\vert }{t}dt+\sum_{k=0}^{1}a_{n,n-2k}\sum_{m=2}^{\frac{n}{2}}\int_{%
\frac{\pi }{m}}^{\frac{\pi }{m-1}}\frac{\left\vert \varphi _{x}\left(
t\right) \right\vert }{t}dt
\end{eqnarray*}%
\begin{eqnarray*}
&\leq &\sum_{k=2}^{\frac{n}{2}}a_{n,n-2k}\sum_{m=k}^{\frac{n}{2}}\left\{
\int_{\frac{\pi }{m}}^{\frac{\pi }{m-1}}\left[ \frac{\left\vert \varphi
_{x}\left( t\right) \right\vert }{\omega \left( t\right) }\sin ^{\beta }%
\frac{t}{2}\right] ^{p}dt\right\} ^{\frac{1}{p}}\left\{ \int_{\frac{\pi }{m}%
}^{\frac{\pi }{m-1}}\left[ \frac{\omega \left( t\right) }{t\sin ^{\beta }%
\frac{t}{2}}\right] ^{q}dt\right\} ^{\frac{1}{q}} \\
&&+\sum_{k=0}^{1}a_{n,n-2k}\sum_{m=2}^{\frac{n}{2}}\left\{ \int_{\frac{\pi }{%
m}}^{\frac{\pi }{m-1}}\left[ \frac{\left\vert \varphi _{x}\left( t\right)
\right\vert }{\omega \left( t\right) }\sin ^{\beta }\frac{t}{2}\right]
^{p}dt\right\} ^{\frac{1}{p}}\left\{ \int_{\frac{\pi }{m}}^{\frac{\pi }{m-1}}%
\left[ \frac{\omega \left( t\right) }{t\sin ^{\beta }\frac{t}{2}}\right]
^{q}dt\right\} ^{\frac{1}{q}} \\
&\ll &\sum_{k=0}^{\frac{n}{2}}a_{n,n-2k}\omega \left( \frac{\pi }{k+1}%
\right) \sum_{m=2}^{\frac{n}{2}}m^{\beta +1-\frac{2}{q}}\left\{ \int_{\frac{%
\pi }{m}}^{\frac{\pi }{m-1}}\left[ \frac{\left\vert \varphi _{x}\left(
t\right) \right\vert }{\omega \left( t\right) }\sin ^{\beta }\frac{t}{2}%
\right] ^{p}dt\right\} ^{\frac{1}{p}}
\end{eqnarray*}%
\begin{equation*}
\ll n^{\beta +\frac{1}{p}}\sum_{k=0}^{\frac{n}{2}}a_{n,n-2k}\omega \left( 
\frac{\pi }{k+1}\right) \leq (n+1)^{\beta +\frac{1}{p}%
}\sum_{k=0}^{n}a_{n,n-k}\omega \left( \frac{\pi }{k+1}\right) ,
\end{equation*}%
and for an odd $n$ $(n>2)$%
\begin{eqnarray*}
\left\vert I_{2}\right\vert  &\ll &\int_{\frac{2\pi }{n}}^{\pi }\frac{%
\left\vert \varphi _{x}\left( t\right) \right\vert }{t}\sum_{k=n-2\tau
}^{n}a_{n,k}dt \\
&=&\sum_{m=1}^{\left[ \frac{n}{2}\right] -1}\int_{\frac{\pi }{m+1}}^{\frac{%
\pi }{m}}\frac{\left\vert \varphi _{x}\left( t\right) \right\vert }{t}%
\sum_{k=0}^{\tau }a_{n,n-2k}dt+\int_{\frac{2\pi }{n}}^{\frac{\pi }{\left[ 
\frac{n}{2}\right] }}\frac{\left\vert \varphi _{x}\left( t\right)
\right\vert }{t}\sum_{k=0}^{\tau
}a_{n,n-2k}dt=\sum\nolimits_{1}+\sum\nolimits_{2}.
\end{eqnarray*}%
The first sum we can estimate similarly like above 
\begin{eqnarray*}
\sum\nolimits_{1} &\ll &\sum_{k=0}^{\left[ \frac{n}{2}\right]
}a_{n,n-2k}\omega \left( \frac{\pi }{k+1}\right) \sum_{m=2}^{\left[ \frac{n}{%
2}\right] }\left\{ \int_{\frac{\pi }{m}}^{\frac{\pi }{m-1}}\left[ \frac{%
\left\vert \varphi _{x}\left( t\right) \right\vert }{\omega \left( t\right) }%
\sin ^{\beta }\frac{t}{2}\right] ^{p}dt\right\} ^{\frac{1}{p}}\left\{ \int_{%
\frac{\pi }{m}}^{\frac{\pi }{m-1}}t^{\left( -1-\beta \right) q}dt\right\} ^{%
\frac{1}{q}} \\
&\ll &(n+1)^{\beta +\frac{1}{p}}\sum_{k=0}^{n}a_{n,n-k}\omega \left( \frac{%
\pi }{k+1}\right) .
\end{eqnarray*}%
For the second sum we have 
\begin{equation*}
\sum\nolimits_{2}\leq \sum_{k=0}^{\left[ \frac{n}{2}\right] }a_{n,n-2k}\int_{%
\frac{\pi }{\left[ \frac{n}{2}\right] +1}}^{\frac{\pi }{\left[ \frac{n}{2}%
\right] }}\frac{\left\vert \varphi _{x}\left( t\right) \right\vert }{t}dt
\end{equation*}%
\begin{eqnarray*}
&\leq &\sum_{k=0}^{\left[ \frac{n}{2}\right] }a_{n,n-2k}\left\{ \int_{\frac{%
\pi }{\left[ \frac{n}{2}\right] +1}}^{\frac{\pi }{\left[ \frac{n}{2}\right] }%
}\left[ \frac{\left\vert \varphi _{x}\left( t\right) \right\vert }{\omega
\left( t\right) }\sin ^{\beta }\frac{t}{2}\right] ^{p}dt\right\} ^{\frac{1}{p%
}}\left\{ \int_{\frac{\pi }{\left[ \frac{n}{2}\right] +1}}^{\frac{\pi }{%
\left[ \frac{n}{2}\right] }}\left[ \frac{\omega \left( t\right) }{t\sin
^{\beta }\frac{t}{2}}\right] ^{q}dt\right\} ^{\frac{1}{q}} \\
&\ll &\sum_{k=0}^{\left[ \frac{n}{2}\right] }a_{n,n-2k}\left( \left[ \frac{n%
}{2}\right] +1\right) ^{\beta +\frac{1}{p}}\omega \left( \frac{\pi }{n+1}%
\right) \ll (n+1)^{\beta +\frac{1}{p}}\sum_{k=0}^{n}a_{n,n-k}\omega \left( 
\frac{\pi }{k+1}\right) .
\end{eqnarray*}%
Collecting these estimates we obtain the desired result. $\blacksquare $

\subsection{Proof of Theorem 2}

Similarly to the proof of Theorem 1, we have 
\begin{equation*}
\left\vert T_{n,A}^{\text{ }}f\left( x\right) -f\left( x\right) \right\vert
\leq \left\vert I_{1}\right\vert +\left\vert I_{2}\right\vert .
\end{equation*}%
and for $\beta <1-\frac{1}{p},$%
\begin{equation*}
\left\vert I_{1}\right\vert \ll \left( n+1\right) ^{\beta }\omega \left( 
\frac{\pi }{n+1}\right) .
\end{equation*}%
Suppose that $\left( a_{n,k}\right) _{k=0}^{n}\in MRBVS$. By the H\"{o}lder
inequality $\left( \frac{1}{p}+\frac{1}{q}=1\right) ,$ Lemma 2 and (\ref{2.8}%
) for an even $n$ 
\begin{equation*}
\left\vert I_{2}\right\vert \ll \int_{\frac{2\pi }{n}}^{\pi }\frac{%
\left\vert \varphi _{x}\left( t\right) \right\vert }{t}\sum_{k=0}^{\tau
}a_{n,k}dt=\sum_{m=1}^{\frac{n}{2}-1}\int_{\frac{\pi }{m+1}}^{\frac{\pi }{m}}%
\frac{\left\vert \varphi _{x}\left( t\right) \right\vert }{t}%
\sum_{k=0}^{\tau }a_{n,k}dt
\end{equation*}%
\begin{equation*}
\ll \sum_{k=0}^{\frac{n}{2}}a_{n,k}\omega \left( \frac{\pi }{k+1}\right)
\sum_{m=2}^{\frac{n}{2}}m^{\beta +1-\frac{2}{q}}\left\{ \int_{\frac{\pi }{m}%
}^{\frac{\pi }{m-1}}\left[ \frac{\left\vert \varphi _{x}\left( t\right)
\right\vert }{\omega \left( t\right) }\sin ^{\beta }\frac{t}{2}\right]
^{p}dt\right\} ^{\frac{1}{p}}
\end{equation*}%
\begin{equation*}
\ll (n+1)^{\beta }\sum_{k=0}^{n}a_{n,k}\omega \left( \frac{\pi }{k+1}\right)
,
\end{equation*}%
and for an odd $n$ $(n>2)$%
\begin{equation*}
\left\vert I_{2}\right\vert \ll \sum_{m=1}^{\left[ \frac{n}{2}\right]
-1}\int_{\frac{\pi }{m+1}}^{\frac{\pi }{m}}\frac{\left\vert \varphi
_{x}\left( t\right) \right\vert }{t}\sum_{k=0}^{\tau }a_{n,k}dt+\int_{\frac{%
2\pi }{n}}^{\frac{\pi }{\left[ \frac{n}{2}\right] }}\frac{\left\vert \varphi
_{x}\left( t\right) \right\vert }{t}\sum_{k=0}^{\tau
}a_{n,k}dt=\sum\nolimits_{1}+\sum\nolimits_{2}.
\end{equation*}%
For the first sum we have 
\begin{eqnarray*}
\sum\nolimits_{1} &\ll &\sum_{k=0}^{\left[ \frac{n}{2}\right] }a_{n,k}\omega
\left( \frac{\pi }{k+1}\right) \sum_{m=2}^{\left[ \frac{n}{2}\right]
}\left\{ \int_{\frac{\pi }{m}}^{\frac{\pi }{m-1}}\left[ \frac{\left\vert
\varphi _{x}\left( t\right) \right\vert }{\omega \left( t\right) }\sin
^{\beta }\frac{t}{2}\right] ^{p}dt\right\} ^{\frac{1}{p}}\left\{ \int_{\frac{%
\pi }{m}}^{\frac{\pi }{m-1}}t^{\left( -1-\beta \right) q}dt\right\} ^{\frac{1%
}{q}} \\
&\ll &(n+1)^{\beta }\sum_{k=0}^{n}a_{n,k}\omega \left( \frac{\pi }{k+1}%
\right) 
\end{eqnarray*}%
and for the second one 
\begin{equation*}
\sum\nolimits_{2}\leq \sum_{k=0}^{\left[ \frac{n}{2}\right] }a_{n,k}\int_{%
\frac{\pi }{\left[ \frac{n}{2}\right] +1}}^{\frac{\pi }{\left[ \frac{n}{2}%
\right] }}\frac{\left\vert \varphi _{x}\left( t\right) \right\vert }{t}dt
\end{equation*}%
\begin{eqnarray*}
&\leq &\sum_{k=0}^{\left[ \frac{n}{2}\right] }a_{n,k}\left\{ \int_{\frac{\pi 
}{\left[ \frac{n}{2}\right] +1}}^{\frac{\pi }{\left[ \frac{n}{2}\right] }}%
\left[ \frac{\left\vert \varphi _{x}\left( t\right) \right\vert }{\omega
\left( t\right) }\sin ^{\beta }\frac{t}{2}\right] ^{p}dt\right\} ^{\frac{1}{p%
}}\left\{ \int_{\frac{\pi }{\left[ \frac{n}{2}\right] +1}}^{\frac{\pi }{%
\left[ \frac{n}{2}\right] }}\left[ \frac{\omega \left( t\right) }{t\sin
^{\beta }\frac{t}{2}}\right] ^{q}dt\right\} ^{\frac{1}{q}} \\
&\ll &\sum_{k=0}^{\left[ \frac{n}{2}\right] }a_{n,k}\omega \left( \frac{\pi 
}{n+1}\right) (\left[ \frac{n}{2}\right] +1)^{1+\beta -\frac{2}{q}}\left\{
\int_{\frac{\pi }{\left[ \frac{n}{2}\right] +1}}^{\frac{\pi }{\left[ \frac{n%
}{2}\right] }}\left[ \frac{\left\vert \varphi _{x}\left( t\right)
\right\vert }{\omega \left( t\right) }\sin ^{\beta }\frac{t}{2}\right]
^{p}dt\right\} ^{\frac{1}{p}} \\
&\ll &(n+1)^{\beta }\sum_{k=0}^{n}a_{n,k}\omega \left( \frac{\pi }{k+1}%
\right) .
\end{eqnarray*}%
Consequently, let $\left( a_{n,k}\right) _{k=0}^{n}\in MHBVS$. By the H\"{o}%
lder inequality $\left( \frac{1}{p}+\frac{1}{q}=1\right) ,$ Lemma 2 and (\ref%
{2.8}) for an even $n$%
\begin{equation*}
\left\vert I_{2}\right\vert \ll \int_{\frac{2\pi }{n}}^{\pi }\frac{%
\left\vert \varphi _{x}\left( t\right) \right\vert }{t}\sum_{k=n-2\tau
}^{n}a_{n,k}dt=\sum_{m=1}^{\frac{n}{2}-1}\int_{\frac{\pi }{m+1}}^{\frac{\pi 
}{m}}\frac{\left\vert \varphi _{x}\left( t\right) \right\vert }{t}%
\sum_{k=0}^{\tau }a_{n,n-2k}dt
\end{equation*}%
\begin{equation*}
\ll \sum_{k=0}^{\frac{n}{2}}a_{n,n-2k}\omega \left( \frac{\pi }{k+1}\right)
\sum_{m=2}^{\frac{n}{2}}m^{\beta +1-\frac{2}{q}}\left\{ \int_{\frac{\pi }{m}%
}^{\frac{\pi }{m-1}}\left[ \frac{\left\vert \varphi _{x}\left( t\right)
\right\vert }{\omega \left( t\right) }\sin ^{\beta }\frac{t}{2}\right]
^{p}dt\right\} ^{\frac{1}{p}}
\end{equation*}%
\begin{equation*}
\ll n^{\beta }\sum_{k=0}^{\frac{n}{2}}a_{n,n-2k}\omega \left( \frac{\pi }{k+1%
}\right) \ll (n+1)^{\beta }\sum_{k=0}^{n}a_{n,n-k}\omega \left( \frac{\pi }{%
k+1}\right) ,
\end{equation*}%
and for an odd $n$ $(n>2)$%
\begin{equation*}
\left\vert I_{2}\right\vert \ll \sum_{m=1}^{\left[ \frac{n}{2}\right]
-1}\int_{\frac{\pi }{m+1}}^{\frac{\pi }{m}}\frac{\left\vert \varphi
_{x}\left( t\right) \right\vert }{t}\sum_{k=0}^{\tau }a_{n,n-2k}dt+\int_{%
\frac{2\pi }{n}}^{\frac{\pi }{\left[ \frac{n}{2}\right] }}\frac{\left\vert
\varphi _{x}\left( t\right) \right\vert }{t}\sum_{k=0}^{\tau
}a_{n,n-2k}dt=\sum\nolimits_{1}+\sum\nolimits_{2}.
\end{equation*}%
Analogously as above 
\begin{eqnarray*}
\sum\nolimits_{1} &\ll &\sum_{k=0}^{\left[ \frac{n}{2}\right]
}a_{n,n-2k}\omega \left( \frac{\pi }{k+1}\right) \sum_{m=2}^{\left[ \frac{n}{%
2}\right] }\left\{ \int_{\frac{\pi }{m}}^{\frac{\pi }{m-1}}\left[ \frac{%
\left\vert \varphi _{x}\left( t\right) \right\vert }{\omega \left( t\right) }%
\sin ^{\beta }\frac{t}{2}\right] ^{p}dt\right\} ^{\frac{1}{p}}\left\{ \int_{%
\frac{\pi }{m}}^{\frac{\pi }{m-1}}t^{\left( -1-\beta \right) q}dt\right\} ^{%
\frac{1}{q}} \\
&\ll &(n+1)^{\beta }\sum_{k=0}^{n}a_{n,n-k}\omega \left( \frac{\pi }{k+1}%
\right) 
\end{eqnarray*}%
and 
\begin{equation*}
\sum\nolimits_{2}\leq \sum_{k=0}^{\left[ \frac{n}{2}\right] }a_{n,n-2k}\int_{%
\frac{\pi }{\left[ \frac{n}{2}\right] +1}}^{\frac{\pi }{\left[ \frac{n}{2}%
\right] }}\frac{\left\vert \varphi _{x}\left( t\right) \right\vert }{t}dt
\end{equation*}%
\begin{eqnarray*}
&\leq &\sum_{k=0}^{\left[ \frac{n}{2}\right] }a_{n,n-2k}\left\{ \int_{\frac{%
\pi }{\left[ \frac{n}{2}\right] +1}}^{\frac{\pi }{\left[ \frac{n}{2}\right] }%
}\left[ \frac{\left\vert \varphi _{x}\left( t\right) \right\vert }{\omega
\left( t\right) }\sin ^{\beta }\frac{t}{2}\right] ^{p}dt\right\} ^{\frac{1}{p%
}}\left\{ \int_{\frac{\pi }{\left[ \frac{n}{2}\right] +1}}^{\frac{\pi }{%
\left[ \frac{n}{2}\right] }}\left[ \frac{\omega \left( t\right) }{t\sin
^{\beta }\frac{t}{2}}\right] ^{q}dt\right\} ^{\frac{1}{q}} \\
&\ll &\sum_{k=0}^{\left[ \frac{n}{2}\right] }a_{n,n-2k}\left( \left[ \frac{n%
}{2}\right] +1\right) ^{\beta }\omega \left( \frac{\pi }{n+1}\right) \ll
(n+1)^{\beta }\sum_{k=0}^{n}a_{n,n-k}\omega \left( \frac{\pi }{k+1}\right) .
\end{eqnarray*}%
Collecting these estimates we obtain the desired result. $\blacksquare $

\subsection{Proofs of Theorems 3}

The proof is similar to these above. The expressions in the estimates under
the $L^{p}$ norm with respect to $x$ will be like these on the left hand
side of our conditions (\ref{2.7}) and (\ref{2.8}) or (\ref{2.6}) when $\
\beta >0$ or $\beta =0,$ respectively. Since $f\in L^{p}\left( \omega
\right) _{\beta }$, the such norm quantities will always have the same
orders like these on the right hand side of the mentioned conditions.
Therefore the proof follows without any additional assumptions. $%
\blacksquare $

\bigskip

\end{document}